\numberwithin{equation}{section}
\theoremstyle{definition}
\newtheorem{theorem}{Theorem}[section]
\newtheorem{remark}{Remark}[section]
\newtheorem{problem}{Problem}[section]
\newtheorem{conjecture}{Conjecture}[section]
\newtheorem{lemma}{Lemma}[section]
\newtheorem{definition}{Definition}[section]
\newtheorem{proposition}{Proposition}[section]
\newtheorem{example}{Example}[section]
\DeclareMathOperator{\vol}{vol}
\DeclareMathOperator{\Mov}{Mov}
\title{Movable Intersection And Bigness Criterion}
\author{Jian Xiao}
\date{}
\begin{document}
\maketitle

\begin{abstract}
In this note, we give a Morse-type bigness criterion for the difference of two pseudo-effective $(1,1)$-classes by using movable intersections. As an application, we give a Morse-type bigness criterion for the difference of two movable $(n-1,n-1)$-classes.
\end{abstract}


\section{Introduction}
Let $X$ be a smooth projective variety of dimension $n$, and let $A, B$ be two nef line bundles, then we have the fundamental inequality
$$\vol(A-B)\geq A^n -nA^{n-1}\cdot B,$$
which is first discovered as a consequence of Demailly's holomorphic Morse inequalities (see \cite{Dem85}, \cite{Siu93}, \cite{Tra95}). Thus the above inequality is usually called algebraic Morse inequality for line bundles. Recall that the volume of a holomorphic line bundle $L$ is defined as
$$
\vol(L):=\limsup_{k\rightarrow +\infty} \frac{n!}{k^n}h^0 (X, \mathcal{O}(kL)).
$$
And $L$ is called a big line bundle if $\vol(L)>0$. In particular, the Morse-type inequality for $A,B$ implies that $A-B$ must be a big line bundle if $A^n -nA^{n-1}\cdot B>0$. This provides a very effective way to construct holomorphic sections; see \cite{DMR10, Dem11} for related applications.

Assume that $L$ is a holomorphic line bundle over a compact K\"ahler manifold $X$, then it is proved by \cite[Theorem 1.2]{Bou02a} that the volume of $L$ can be characterized as the maximum of the Monge-Amp\`{e}re mass of the positive curvature currents contained in the class $c_1(L)$.
This naturally extends the volume function $\vol(\cdot)$ to transcendental $(1,1)$-classes over compact complex manifold (see \cite[Definition 1.3]{Bou02a} or \cite[Definition 3.2]{BDPP13}).

Recall that Demailly's conjecture on (weak) transcendental holomorphic Morse inequality over compact K\"ahler manifolds is the following statement.

\begin{conjecture} 
(see \cite[Conjecture 10.1]{BDPP13})\footnote{For projective manifolds, this conjecture has been confirmed by \cite{nystrom2016duality}.}
Let $X$ be a compact K\"ahler manifold of dimension $n$, and let $\alpha, \beta\in H^{1,1}(X,\mathbb{R})$ be two nef classes. Then we have
$$\vol(\alpha-\beta)\geq \alpha^n -n \alpha^{n-1}\cdot \beta.$$
In particular, if $\alpha^n -n \alpha^{n-1}\cdot \beta>0$ then there exists a K\"ahler current in the class $\alpha-\beta$.
\end{conjecture}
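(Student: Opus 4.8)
The plan is to deduce the conjecture from the first-order behaviour of the volume function along the segment $\alpha-t\beta$, using the positive (movable) intersection product as the main tool. Since the assertion is only nontrivial when $\alpha^{n}-n\alpha^{n-1}\cdot\beta>0$, I may assume this, and in particular $\alpha^{n}>0$, so that the nef class $\alpha$ is big; moreover for a nef class the positive intersection product reduces to the ordinary cup product, so $\vol(\alpha)=\langle\alpha^{n}\rangle=\alpha^{n}$ and $\langle\alpha^{n-1}\rangle\cdot\gamma=\alpha^{n-1}\cdot\gamma$ for every class $\gamma$. I would then study the single real function $f(t):=\vol(\alpha-t\beta)$. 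The big cone is open and convex, so the set of $t$ with $\alpha-t\beta$ big is an open interval containing $0$; on it $f>0$, and by Boucksom's characterization $f$ is continuous and vanishes at the endpoints.

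The two structural inputs I would invoke are: (a) the differentiability of the volume on the big cone together with the derivative formula
\[
\frac{d}{dt}\vol(\alpha-t\beta)=-\,n\,\langle(\alpha-t\beta)^{n-1}\rangle\cdot\beta,
\]
the transcendental analogue of the Boucksom--Favre--Jonsson formula; and (b) the monotonicity of the positive intersection product: if $\gamma_{1},\gamma_{2}$ are big with $\gamma_{2}-\gamma_{1}$ pseudo-effective then $\langle\gamma_{1}^{n-1}\rangle\cdot\beta\le\langle\gamma_{2}^{n-1}\rangle\cdot\beta$ for nef $\beta$. Since $\beta$ is nef, $t\beta$ is pseudo-effective, so $\alpha-t\beta$ decreases in the pseudo-effective order as $t$ grows; hence $\langle(\alpha-t\beta)^{n-1}\rangle\cdot\beta\le\langle\alpha^{n-1}\rangle\cdot\beta=\alpha^{n-1}\cdot\beta$ for every $t\ge0$ in the big range.

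Combining (a) and (b) and integrating, I would argue by contradiction. Suppose $\alpha-\beta$ is not big and let $\tau\in(0,1]$ be the right endpoint of the bigness interval, so that $f(\tau)=0$ by continuity. Then
\[
0=f(\tau)=\alpha^{n}-n\int_{0}^{\tau}\langle(\alpha-t\beta)^{n-1}\rangle\cdot\beta\,dt\ge \alpha^{n}-n\tau\,\alpha^{n-1}\cdot\beta\ge \alpha^{n}-n\,\alpha^{n-1}\cdot\beta>0,
\]
a contradiction. Hence $\alpha-\beta$ is big, a K\"ahler current exists in the class, and running the same estimate on $[0,1]$ gives $\vol(\alpha-\beta)=f(1)\ge\alpha^{n}-n\alpha^{n-1}\cdot\beta$, the desired inequality. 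If $\alpha$ is merely nef and not big, one passes to $\alpha+\varepsilon\omega$ and lets $\varepsilon\to0$, using continuity of both sides.

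The hard part is input (a): establishing, over a general compact K\"ahler manifold, that $\vol$ is $C^{1}$ on the big cone with derivative given by the movable intersection. In the projective case this rests on the orthogonality estimate $\langle P(\gamma)^{n-1}\rangle\cdot N(\gamma)=0$ for the divisorial Zariski decomposition $\gamma=P(\gamma)+N(\gamma)$, and ultimately on Fujita-type approximation; in the transcendental setting none of these algebraic tools is directly available, and one must instead produce the relevant currents analytically, via regularised Monge--Amp\`ere equations or a mass-concentration construction for the difference class. This is precisely the analytic difficulty that makes the full conjecture delicate, and I expect the role of the movable intersection theory in this note to be exactly to package the needed first-order and orthogonality information so that the elementary integration argument above can be carried out.
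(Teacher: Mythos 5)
The statement you were asked to prove is stated in the paper as a \emph{conjecture}, and the paper does not prove it; there is no internal proof to compare against. What the paper does establish is that the conjectured inequality is \emph{equivalent} to the $\mathcal{C}^1$ differentiability of the volume function for transcendental big $(1,1)$-classes with derivative $n\langle\alpha^{n-1}\rangle\cdot\gamma$ (Proposition \ref{prop equiv intro}, equivalence of (1) and (3)), and that only the qualitative bigness part, ``$\alpha^n-n\alpha^{n-1}\cdot\beta>0$ implies $\alpha-\beta$ contains a K\"ahler current,'' is known, by the mass-concentration method of \cite{chiose2013kahler} and \cite{Pop14}.

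Your argument has a genuine gap, and it sits exactly where you flag ``the hard part.'' Input (a) --- the formula $\frac{d}{dt}\vol(\alpha-t\beta)=-n\langle(\alpha-t\beta)^{n-1}\rangle\cdot\beta$ on a general compact K\"ahler manifold --- is statement (3) of Proposition \ref{prop equiv intro}, and the paper proves (1)$\Leftrightarrow$(3): the differentiability you invoke is logically equivalent to the Morse inequality you are trying to prove. Your integration step is essentially the paper's own proof of (3)$\Rightarrow$(2), so as a proof of the conjecture the argument is circular; it relocates rather than resolves the difficulty. In the projective case, with $\alpha,\beta$ in the N\'eron--Severi space, input (a) is supplied by \cite[Theorem A]{BFJ09} via Fujita approximation and the orthogonality estimate, and your scheme then does go through --- that is precisely Theorem \ref{thm line mov-morse}. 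For transcendental classes no proof of (a) is known, and the unconditional bigness criterion of \cite{Pop14} is obtained by a genuinely different route (Demailly regularization, log-resolution, and a Monge--Amp\`ere mass-concentration argument on the resolved smooth forms), with no appeal to differentiability of $\vol$. The remaining ingredients of your outline --- the monotonicity $\langle(\alpha-t\beta)^{n-1}\rangle\cdot\beta\le\langle\alpha^{n-1}\rangle\cdot\beta$, the reduction to $\alpha$ big, and the perturbation $\alpha+\varepsilon\omega$ --- are consistent with Corollary \ref{cor mov ineq} and the paper's proof of (3)$\Rightarrow$(2); the single missing piece (a) is the entire content of the open problem.
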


Based on the method of \cite{chiose2016kahler}, in our previous work \cite{xiao13weakmorse}, it was proved that if $\alpha^n -4n \alpha^{n-1}\cdot \beta>0$ then there exists a K\"ahler current in the class $\alpha-\beta$. Recently, by keeping the same method as in \cite{chiose2016kahler, xiao13weakmorse} and with the new technique introduced by \cite{popovici2016sufficientbig}, D.~Popovici proved that the constant $4n$ can be improved to be the natural and optimal constant $n$. Thus we have a Morse-type criterion for the difference of two transcendental nef classes -- indeed, our results in this note depend mainly on this important improvement.

It is natural to ask whether the above Morse-type bigness criterion ``$\alpha^n -n \alpha^{n-1}\cdot \beta>0 \Rightarrow \vol(\alpha-\beta)>0$" for nef classes can be generalized to pseudo-effective $(1,1)$-classes. Towards this generalization, we need the movable intersection products (denoted by $\langle-\rangle$) of pseudo-effective $(1,1)$-classes developed in \cite{Bou02b, BDPP13}. Then our problem can be stated as following:

\begin{problem}\label{problem}
Let $X$ be a compact K\"ahler manifold of dimension $n$, and let $\alpha, \beta\in H^{1,1}(X,\mathbb{R})$ be two pseudo-effective classes. Does $\vol(\alpha) -n \langle \alpha^{n-1} \rangle \cdot \beta>0$ imply that there exists a K\"ahler current in the class $\alpha-\beta$?
\end{problem}

Unfortunately, a very simple example due to \cite{Tra95}
implies that the above generalization does not always hold.

\begin{example}
(see \cite[Example 3.8]{Tra95})
\label{eg trapani}
Let $\pi: X\rightarrow \mathbb{P}^2$ be the blow-up of $\mathbb{P}^2$ along a point $p$. Let $R=\pi^* H$, where $H$ is the hyperplane line bundle on $\mathbb{P}^2$. Let $E=\pi^{-1}(p)$ be the exceptional divisor. Then for every positive integral $k$, the space of global holomorphic sections of $k(R-2E)$ is the space of homogeneous polynomials in three variables of degree at most $k$ and vanishes of order $2k$ at $p$; hence $k(R-2E)$ does not have any global holomorphic sections. The space $H^0 (X, \mathcal{O}(k(R-2E)))=\{0\}$ implies that $R-2E$ can not be big. However, we have $R^2-R\cdot 2E >0$ as $R^2=1$ and $R\cdot E=0$.
\end{example}

As the first result of this note, we show that the answer to Problem \ref{problem} is YES if $\beta$ is movable. Here $\beta$ being movable means that the negative part of $\beta$ vanishes in its divisorial Zariski decomposition (see \cite{Bou04}).
In particular, if $\beta=c_1(L)$ for some pseudo-effective line bundle, then $\beta$ being movable is equivalent to that the base locus of $mL+A$ is of codimension at least two for a fixed ample line bundle $A$ and for large $m$.

\begin{theorem}
\label{thm mov-big}
Let $X$ be a compact K\"ahler manifold of dimension $n$, and let $\alpha, \beta\in H^{1,1}(X, \mathbb{R})$ be two pseudo-effective classes with $\beta$ movable. Then $\vol(\alpha)-n \langle\alpha^{n-1} \rangle \cdot \beta>0$ implies that there exists a K\"ahler current in the class $\alpha-\beta$.
\end{theorem}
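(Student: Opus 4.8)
The plan is to reduce the statement, via Fujita-type approximation of the big class $\alpha$, to Popovici's Morse inequality \cite{Pop14} for two \emph{nef} classes, using the movability of $\beta$ to discard the problematic negative contributions under pushforward.

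First, since $\vol(\alpha)\ge \vol(\alpha)-n\langle\alpha^{n-1}\rangle\cdot\beta>0$, the class $\alpha$ is big. Fix the gap $c:=\vol(\alpha)-n\langle\alpha^{n-1}\rangle\cdot\beta>0$. By the construction of the movable intersection product of \cite{Bou02b,BDPP13}, choose a modification $\mu\colon \hat X\to X$ and a nef (and big) class $\omega$ on $\hat X$ with $\mu^{*}\alpha=\omega+E$, $E\ge 0$ effective, such that
$$\omega^{n}>\vol(\alpha)-\tfrac{c}{4},\qquad \omega^{n-1}\cdot\mu^{*}\beta=\mu_{*}(\omega^{n-1})\cdot\beta<\langle\alpha^{n-1}\rangle\cdot\beta+\tfrac{c}{4n},$$
the middle equality being the projection formula. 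Throughout I will use that bigness is preserved under pushforward by a modification: if $\theta$ is big on a model $Z$ with $\rho\colon Z\to X$, then writing $\theta\succeq\varepsilon\,\hat\omega$ for a Kähler $\hat\omega\ge \rho^{*}\omega_{X}$ one gets $\rho_{*}\theta\succeq\varepsilon\,\rho_{*}\hat\omega\succeq\varepsilon\,\omega_{X}$, so $\rho_{*}\theta$ is big on $X$.

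Next I exploit the hypothesis that $\beta$ is movable. Writing the divisorial Zariski decomposition $\mu^{*}\beta=P+N$, the generic multiplicity of $\mu^{*}\beta$ along the strict transform of any prime divisor of $X$ equals that of $\beta=P(\beta)$, which vanishes; hence $N$ is $\mu$-exceptional, $P$ is movable, and $\mu_{*}P=\beta$, $\mu_{*}N=0$. Since $\omega$ is nef and $N\ge0$, we have $\omega^{n-1}\cdot P\le\omega^{n-1}\cdot\mu^{*}\beta$, so $\omega^{n}-n\,\omega^{n-1}\cdot P>c/2>0$. As $P$ is movable, there is a further modification $\tau\colon Z\to\hat X$ and a nef class $\theta$ on $Z$ with $\tau_{*}\theta$ arbitrarily close to $P$; for $\tau_{*}\theta$ close enough, $(\tau^{*}\omega)^{n}-n(\tau^{*}\omega)^{n-1}\cdot\theta=\omega^{n}-n\,\omega^{n-1}\cdot\tau_{*}\theta>0$. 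Popovici's Morse inequality \cite{Pop14} for the nef classes $\tau^{*}\omega$ and $\theta$ then yields a Kähler current in $\tau^{*}\omega-\theta$, so this class is big on $Z$. Pushing down by $\rho:=\mu\circ\tau$, the class $\gamma_{0}:=\rho_{*}(\tau^{*}\omega-\theta)=\alpha-\mu_{*}E-\mu_{*}\tau_{*}\theta$ is big on $X$; here $\mu_{*}\tau_{*}\theta$ is close to $\mu_{*}P=\beta$ precisely because $\mu_{*}N=0$.

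It remains to pass from the bigness of $\gamma_{0}$ to that of $\alpha-\beta$, and this is where I expect the main difficulty. Writing $\zeta:=\beta-\mu_{*}\tau_{*}\theta=\mu_{*}(P-\tau_{*}\theta)$, one has $\alpha-\beta=(\gamma_{0}-\zeta)+\mu_{*}E$ with $\mu_{*}E\ge0$, so it suffices that $\gamma_{0}-\zeta$ be big. If $\gamma_{0}\succeq\varepsilon_{0}\,\omega_{X}$ and the movable approximation is taken fine enough that $\zeta\preceq\tfrac{\varepsilon_{0}}{2}\,\omega_{X}$, then $\gamma_{0}-\zeta\succeq\tfrac{\varepsilon_{0}}{2}\,\omega_{X}$ is big and we are done. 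The delicate point is that $\varepsilon_{0}$, the bigness gap of $\gamma_{0}$, a priori degenerates as $\tau_{*}\theta\to P$ (so that $\zeta\to0$), since the models $Z$ and the Kähler currents produced by Popovici vary; a naive passage to the limit only returns pseudo-effectivity of $\alpha-\beta$. The hard part is therefore to make the positivity uniform: one must extract from the strict Morse inequality $\omega^{n}-n\,\omega^{n-1}\cdot P>c/2$ a quantitative lower bound $\varepsilon_{0}\ge\varepsilon(c,\omega)>0$ for the Kähler current, independent of the approximation. I expect this to follow either from a quantitative form of Popovici's construction—whose a priori Monge–Amp\`ere estimates depend only on the fixed, convergent intersection numbers $\omega^{n}$ and $\omega^{n-1}\cdot P$ and on the fixed reference $\omega$—or, bypassing the moving models, from running the Chiose--Popovici mass-concentration argument \cite{chiose2013kahler,Pop14} directly for the pair $(\omega,P)$ on $\hat X$, with the movability of $P$ controlling the Monge--Amp\`ere mass along the exceptional locus.
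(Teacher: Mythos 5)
The gap you flag in your last paragraph is genuine, and it is exactly the point at which your route fails: after approximating $P$ by $\tau_{*}\theta$ you only obtain bigness of $\alpha-\mu_{*}E-\mu_{*}\tau_{*}\theta$ for a sequence of classes $\mu_{*}\tau_{*}\theta\to\beta$, and since the models $Z$ and the currents produced by \cite{Pop14} change along the sequence, the bigness constant $\varepsilon_{0}$ has no a priori lower bound; the limit only yields pseudo-effectivity of $\alpha-\beta$. Neither of your two proposed repairs is carried out, and the second one (running the mass-concentration argument for the pair $(\omega,P)$ with $P$ merely movable) would amount to reproving Popovici's theorem in a setting where it is not stated. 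The paper sidesteps the degeneration entirely by never approximating $\beta$: one first writes $\alpha-\beta=(\alpha+\delta\omega)-(\beta+\delta\omega)$ and notes that $\langle(\alpha+\delta\omega)^{n}\rangle-n\langle(\alpha+\delta\omega)^{n-1}\rangle\cdot(\beta+\delta\omega)$ is still positive for small $\delta$; by Proposition \ref{prop modified kahler} the perturbed class $\beta+\delta\omega$ is \emph{exactly} equal to $\mu_{*}\widetilde{\omega}$ for an honest K\"ahler class $\widetilde{\omega}$ on a single modification $\mu:Y\to X$ --- there is no residual error term $\zeta$. The whole problem is then pulled back to $Y$ (using Proposition \ref{prop pull-back product} and Corollary \ref{cor same products} to transport the hypothesis), where the class to be subtracted is K\"ahler; the Fujita-type approximation is applied to $\mu^{*}\alpha$ \emph{there}, and on the final log-resolution both classes entering Popovici's inequality are nef with the subtracted class equal to the exact pullback of $\widetilde{\omega}$. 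Bigness of $\mu^{*}\alpha-\widetilde{\omega}$ then pushes down to bigness of $\alpha-\beta$ on the nose. In short: the order of operations matters --- represent $\beta$ exactly on a higher model first, then Fujita-approximate $\alpha$, rather than the reverse.

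A secondary point: your choice of Fujita approximation with $\mu_{*}(\omega^{n-1})\cdot\beta<\langle\alpha^{n-1}\rangle\cdot\beta+\tfrac{c}{4n}$ is not justified for a merely pseudo-effective (even movable) $\beta$. The limsup inequality $\limsup\mu_{*}(\omega^{n-1})\leq\langle\alpha^{n-1}\rangle$ holds in the sense of pairing with smooth $\partial\bar\partial$-closed \emph{positive} forms, and the defect class, being only a positive $(n-1,n-1)$-class, pairs with a definite sign against nef classes (by the duality $\overline{\mathcal{K}}^{\vee}=\mathcal{N}$) but not obviously against movable ones; this is precisely why Corollary \ref{cor same products} is stated for $\beta$ nef and why the paper remarks that the movable case of that equality is unclear. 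This issue, too, disappears once $\beta$ is replaced by $\beta+\delta\omega=\mu_{*}\widetilde{\omega}$ and the computation is done upstairs against the K\"ahler form $\widetilde{\omega}$.
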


\begin{remark}
In the case when $\beta=0$, Theorem \ref{thm mov-big} is just \cite[Theorem 4.7]{Bou02a}, and when $\alpha$ is also nef, it is \cite[Theorem 0.5]{DP04}.
\end{remark}

An ancillary goal of the note is to explain the fact that Demailly's conjecture on weak transcendental holomorphic Morse inequality over compact K\"ahler manifolds is equivalent to the $\mathcal{C}^1$ differentiability of the volume function for transcendental $(1,1)$-classes. Though not stated explicitly, this fact is essentially contained in \cite[Section 3]{BFJ09} and the key ingredients are also implicitly contained in \cite{BDPP13}.

\begin{proposition}
\label{prop equiv intro}
Let $X$ be a compact K\"ahler manifold of dimension $n$. Then the following statements are equivalent:
\begin{enumerate}
  \item Let $\alpha,\beta\in H^{1,1}(X, \mathbb{R})$ be two nef classes, then we have $$\vol(\alpha-\beta)\geq \alpha^n-n\alpha^{n-1}\cdot \beta.$$
  \item Let $\alpha,\beta\in H^{1,1}(X, \mathbb{R})$ be two pseudo-effective classes with $\beta$ movable, then $$\vol(\alpha-\beta)\geq \langle\alpha^n\rangle -n\langle\alpha^{n-1}\rangle \cdot \beta.$$
  \item Let $\alpha, \gamma\in H^{1,1}(X, \mathbb{R})$ be two $(1,1)$-classes with $\alpha$ big, then we have
      $$\left. \frac{d}{dt} \right|_{t = 0} \vol(\alpha+t\gamma)
      =n\langle \alpha^{n-1}\rangle\cdot \gamma.$$
\end{enumerate}
\end{proposition}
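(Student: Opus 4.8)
The plan is to run the cycle $(2)\Rightarrow(1)\Rightarrow(3)\Rightarrow(2)$, using throughout Boucksom's identity $\vol(\alpha)=\langle\alpha^n\rangle$ for pseudo-effective $\alpha$ and the formal calculus of positive products (homogeneity, monotonicity, superadditivity, continuity on the big cone), together with the duality between the pseudo-effective cone in bidegree $(1,1)$ and the movable cone in bidegree $(n-1,n-1)$, all taken from \cite{Bou02a,Bou02b,BDPP13}. The implication $(2)\Rightarrow(1)$ is immediate: a nef class is pseudo-effective and movable, and on nef classes the positive products reduce to ordinary ones, so $\langle\alpha^n\rangle=\alpha^n$ and $\langle\alpha^{n-1}\rangle\cdot\beta=\alpha^{n-1}\cdot\beta$, whence $(2)$ specializes exactly to $(1)$.

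The core is $(1)\Rightarrow(3)$, the transcendental form of the differentiability theorem of \cite{BFJ09}. First I would reduce to the case where $\alpha$ is nef and the test direction $\gamma$ is nef: for a big $\alpha$ this requires a Fujita-type approximation in the K\"ahler setting, passing to a modification $\mu\colon X'\to X$ on which $\mu^\ast\alpha$ differs from a nef class by an arbitrarily small effective error, and transporting positive products across $\mu$ by the projection formula; a general $\gamma$ is then written as a difference of two nef (indeed K\"ahler) classes. With $\alpha,\gamma$ nef set $f(t)=\vol(\alpha+t\gamma)$. Since $\vol^{1/n}$ is concave, $f$ has one-sided derivatives at $0$ with $f'_+(0)\le f'_-(0)$, so it suffices to squeeze both against $n\langle\alpha^{n-1}\rangle\cdot\gamma$. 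The lower squeeze $f'_+(0)\ge n\langle\alpha^{n-1}\rangle\cdot\gamma$ is unconditional, coming from superadditivity of positive products in the nef direction; the upper squeeze $f'_-(0)\le n\langle\alpha^{n-1}\rangle\cdot\gamma$ is exactly the input of $(1)$, since the Morse inequality $\vol(\alpha-t\gamma)\ge\alpha^n-nt\,\alpha^{n-1}\cdot\gamma$ bounds the left difference quotient from above. Concavity then forces equality, and linearity of $\gamma\mapsto\langle\alpha^{n-1}\rangle\cdot\gamma$ upgrades the directional statement to genuine $\mathcal{C}^1$ differentiability.

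Finally $(3)\Rightarrow(2)$ is an integration. For pseudo-effective $\alpha$ and movable $\beta$ put $F(t)=\vol(\alpha-t\beta)$ on $[0,1]$. On the locus where $\alpha-t\beta$ is big, $(3)$ gives $F'(t)=-n\langle(\alpha-t\beta)^{n-1}\rangle\cdot\beta$. Monotonicity of positive products makes $D_t:=\langle\alpha^{n-1}\rangle-\langle(\alpha-t\beta)^{n-1}\rangle$ a pseudo-effective $(n-1,n-1)$-class, and here the hypothesis that $\beta$ is movable is essential: a movable $(1,1)$-class pairs non-negatively with pseudo-effective $(n-1,n-1)$-classes, so $D_t\cdot\beta\ge0$, that is $\langle(\alpha-t\beta)^{n-1}\rangle\cdot\beta\le\langle\alpha^{n-1}\rangle\cdot\beta$. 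Hence $F'(t)\ge-n\langle\alpha^{n-1}\rangle\cdot\beta$, and integrating with $F(0)=\langle\alpha^n\rangle$ yields $\vol(\alpha-\beta)\ge\langle\alpha^n\rangle-n\langle\alpha^{n-1}\rangle\cdot\beta$. If $\alpha-t\beta$ leaves the big cone at some $t_0\le1$, one argues on $[0,t_0)$ and uses $F\ge0$ with $F(t_0)=0$ to check that the target inequality holds trivially past $t_0$; that movability cannot be dropped is precisely the content of Example \ref{eg trapani}, where $D_t\cdot\beta<0$.

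The main obstacle is the reduction step inside $(1)\Rightarrow(3)$: the unconditional lower squeeze and the concavity are soft, but making the Fujita-type approximation of a transcendental big class precise, and controlling the positive product $\langle\alpha^{n-1}\rangle$ under the modification as the error tends to zero (including the orthogonality of the negative part), is the genuinely delicate point. This is where the non-algebraic nature of the classes bites, and where the ingredients implicit in \cite{BFJ09,BDPP13} must be invoked with care.
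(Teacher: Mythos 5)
Your implications $(2)\Rightarrow(1)$ and $(3)\Rightarrow(2)$ are essentially the paper's argument: the paper also integrates $\frac{d}{dt}\vol(\alpha-t\beta)=-n\langle(\alpha-t\beta)^{n-1}\rangle\cdot\beta$ and bounds the integrand by $n\langle\alpha^{n-1}\rangle\cdot\beta$ using monotonicity of positive products paired against a movable class (this is exactly Corollary \ref{cor mov ineq}, proved there by lifting $\beta+\delta\omega$ to a K\"ahler class on a modification rather than by a cone-duality pairing, which is the safer justification). One small difference: the paper guarantees that $\alpha-t\beta$ stays big on $[0,1]$ by reducing to the case $\langle\alpha^n\rangle-n\langle\alpha^{n-1}\rangle\cdot\beta>0$ and invoking Theorem \ref{thm mov-big}, whereas you handle a possible exit time $t_0$ directly; both work.

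The genuine gap is in $(1)\Rightarrow(3)$. Your plan is to reduce to $\alpha$ \emph{and} $\gamma$ nef and then squeeze the one-sided derivatives; but once both classes are nef, $\vol(\alpha+t\gamma)=(\alpha+t\gamma)^n$ is a polynomial, $(3)$ is a triviality, and $(1)$ is never used --- which shows the reduction cannot be soft: it must carry the entire content of the implication. Concretely, your ``upper squeeze'' $f'_-(0)\le n\langle\alpha^{n-1}\rangle\cdot\gamma$ via $\vol(\alpha-t\gamma)\ge\alpha^n-nt\,\alpha^{n-1}\cdot\gamma$ is only licensed by $(1)$ when $\alpha$ is nef; for $\alpha$ merely big it is a Morse-type inequality of the shape of $(2)$, so invoking it would be circular with $(3)\Rightarrow(2)$. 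Likewise, writing a general $\gamma$ as a difference of K\"ahler classes does not let you subtract directional derivatives of a function you have not yet shown to be differentiable. What the paper (following \cite[Section 3.2]{BFJ09}) actually does is prove, from $(1)$, the quadratic lower bound $\vol(\beta+t\gamma)\ge\beta^n+tn\,\beta^{n-1}\cdot\gamma-Ct^2$ at a \emph{nef} point $\beta$ in an \emph{arbitrary} direction $\gamma$, with $C$ controlled by a class $\omega$ satisfying $\omega\pm\gamma$ nef; it then applies this on log-resolutions $\mu^*\alpha=\beta+[E]$, checks that $C$ is uniform over the resolutions, passes to the limit to get $\vol(\alpha+t\gamma)\ge\vol(\alpha)+tn\langle\alpha^{n-1}\rangle\cdot\gamma-Ct^2$, and obtains the reverse estimate by replacing $\gamma$ with $-\gamma$ together with the continuity $\langle(\alpha+t\gamma)^{n-1}\rangle\to\langle\alpha^{n-1}\rangle$. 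You correctly flag the Fujita-type approximation and the control of $\langle\alpha^{n-1}\rangle$ under modifications as ``the genuinely delicate point,'' but that point is the proof; as written your argument defers it rather than supplying it, and mislocates where hypothesis $(1)$ actually enters.
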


As an application of Proposition \ref{prop equiv intro} and the $\mathcal{C}^1$ differentiability of the volume function for line bundles (see \cite[Theorem A]{BFJ09}), the algebraic Morse inequality can be generalized as following. It generalizes the previous result \cite[Corollary 3.2]{Tra11}.

\begin{theorem}
\label{thm line mov-morse}
Let $X$ be a smooth projective variety of dimension $n$, and let $\alpha, \beta$ be the first Chern classes of two pseudo-effective line bundles with $\beta$ movable. Then we have $$\vol(\alpha-\beta)\geq \vol(\alpha)-n \langle\alpha^{n-1} \rangle \cdot \beta.$$
\end{theorem}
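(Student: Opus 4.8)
The plan is to recognize Theorem~\ref{thm line mov-morse} as the line bundle incarnation of statement~(2) of Proposition~\ref{prop equiv intro}, and to anchor the equivalence there through statement~(3), which in the algebraic setting is supplied unconditionally by the $\mathcal{C}^1$ differentiability of the volume of line bundles (\cite[Theorem A]{BFJ09}). Concretely, for a pseudo-effective line bundle class one has $\langle\alpha^n\rangle=\vol(\alpha)$, so statement~(2) reads $\vol(\alpha-\beta)\geq\vol(\alpha)-n\langle\alpha^{n-1}\rangle\cdot\beta$, which is exactly the desired inequality. Since \cite[Theorem A]{BFJ09} establishes (3) for line bundles independently of the (still open) transcendental conjecture, there is no circularity in feeding it into the implication $(3)\Rightarrow(2)$; one only has to check that this implication, proved in Proposition~\ref{prop equiv intro} for K\"ahler classes, localizes to first Chern classes of line bundles. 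This is legitimate because the algebraic and transcendental volumes agree for line bundles and the positive intersection products $\langle\cdot\rangle$ are computed by the same Fujita-type approximations. (The classical algebraic Morse inequality for nef line bundles provides (1) directly, an alternative anchor, but the route through (3) matches the phrasing of the statement.)

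The analytic heart of $(3)\Rightarrow(2)$ is an integration along the segment $\alpha-s\beta$. First I would dispose of the case where $\alpha$ is not big: then $\alpha-\beta$ is not big either (otherwise $\alpha=(\alpha-\beta)+\beta$ would be big), so $\vol(\alpha-\beta)=0$, while $\langle\alpha^{n-1}\rangle\cdot\beta\geq0$ because $\langle\alpha^{n-1}\rangle\in\overline{\Mov}_1(X)$ and $\beta$ is pseudo-effective (BDPP duality), making the right-hand side $\leq0$. When $\alpha$ is big, set $g(s)=\vol(\alpha-s\beta)$; by \cite[Theorem A]{BFJ09}, wherever $\alpha-s\beta$ is big one has $g'(s)=-n\langle(\alpha-s\beta)^{n-1}\rangle\cdot\beta$. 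If $\alpha-\beta$ is big, the whole segment is big and
$$g(1)=g(0)-n\int_0^1\langle(\alpha-s\beta)^{n-1}\rangle\cdot\beta\,ds;$$
if $\vol(\alpha-\beta)=0$, the same identity on $[0,s_0]$ with $s_0=\sup\{s:\vol(\alpha-s\beta)>0\}$ together with the continuity of $\vol$ gives $\vol(\alpha)=n\int_0^{s_0}\langle(\alpha-s\beta)^{n-1}\rangle\cdot\beta\,ds$. In both cases the conclusion follows once one knows the monotonicity
$$\langle(\alpha-s\beta)^{n-1}\rangle\cdot\beta\leq\langle\alpha^{n-1}\rangle\cdot\beta,\qquad 0\leq s\leq 1.$$

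The main obstacle is precisely this monotonicity, and it is exactly where the hypothesis that $\beta$ is movable is indispensable. Example~\ref{eg trapani} shows it must fail in general: there $\alpha=R$, $\beta=2E$ is \emph{not} movable, the pairing $\langle(\alpha-s\beta)^{n-1}\rangle\cdot\beta$ turns out to be \emph{increasing} in $s$, and $g(s)=\vol(R-2sE)$ is concave, so the tangent inequality $g(1)\geq g(0)+g'(0)$ breaks. For movable $\beta$ the pairing is non-increasing — equivalently $g$ is convex along $-\beta$ — and this is the content extracted from the proof of Proposition~\ref{prop equiv intro}, resting on the divisorial Zariski decomposition, the concavity of $\vol^{1/n}$, and the duality between $\overline{\Mov}_1(X)$ and the pseudo-effective cone of divisors. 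I would therefore present the reduction above in full and then invoke Proposition~\ref{prop equiv intro} for this monotonicity rather than re-deriving it, since it is already available by hypothesis.
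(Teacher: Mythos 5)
Your proposal is correct and follows essentially the paper's own route: the paper proves Theorem \ref{thm line mov-morse} by combining \cite[Theorem A]{BFJ09} (which supplies statement (3) of Proposition \ref{prop equiv intro} for line bundle classes) with the implication (3)$\Rightarrow$(2), whose proof is exactly your integration of $\frac{d}{ds}\vol(\alpha-s\beta)=-n\langle(\alpha-s\beta)^{n-1}\rangle\cdot\beta$ along the segment together with the monotonicity $\langle(\alpha-s\beta)^{n-1}\rangle\cdot\beta\leq\langle\alpha^{n-1}\rangle\cdot\beta$, which is Corollary \ref{cor mov ineq} (not Proposition \ref{prop equiv intro} itself). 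The only, harmless, deviation is in the degenerate case: the paper observes the inequality is trivial when the right-hand side is nonpositive and otherwise invokes Theorem \ref{thm mov-big} to make the whole segment big, whereas you integrate up to the boundary $s_0$ of the big region, which works equally well.
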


\begin{remark}
In particular, if $\alpha$ is nef and $\beta$ is movable then we have $\vol(\alpha-\beta)\geq \alpha^n -n \alpha^{n-1}  \cdot \beta$
which is just \cite[Corollary 3.2]{Tra11}.
\end{remark}

Finally, as an application of Theorem \ref{thm mov-big}, we give a Morse-type bigness criterion for the difference of two movable $(n-1,n-1)$-classes.

\begin{theorem}
\label{thm morse n-1}
Let $X$ be a compact K\"ahler manifold of dimension $n$, and let $\alpha, \beta\in H^{1,1}(X, \mathbb{R})$ be two pseudo-effective classes. Then $\vol(\alpha)
-n \alpha\cdot \langle\beta^{n-1} \rangle >0$ implies that there exists a strictly positive $(n-1,n-1)$-current in the class $\langle \alpha^{n-1} \rangle-\langle\beta^{n-1} \rangle$.
\end{theorem}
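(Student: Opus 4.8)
The plan is to argue by duality, reducing the existence of a strictly positive $(n-1,n-1)$-current in the class $\Omega:=\langle\alpha^{n-1}\rangle-\langle\beta^{n-1}\rangle$ to a numerical positivity statement, and then to derive that statement from Theorem \ref{thm mov-big} via a reverse Khovanskii--Teissier type inequality. First I would record that the hypothesis forces $\vol(\alpha)>0$: since $\alpha$ is pseudo-effective and $\langle\beta^{n-1}\rangle$ is movable, the duality between the pseudo-effective cone in $H^{1,1}(X,\mathbb{R})$ and the movable cone in $H^{n-1,n-1}(X,\mathbb{R})$ from \cite{BDPP13} gives $\alpha\cdot\langle\beta^{n-1}\rangle\geq 0$, whence $\vol(\alpha)>0$ and $\alpha$ is big, so that $\langle\alpha^{n-1}\rangle$ lies in the interior of the movable cone.

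For the reduction I would use that the cone of classes of closed positive $(n-1,n-1)$-currents is dual to the nef cone, and that a class lies in its interior --- equivalently, contains a current dominating $\epsilon\,\omega^{n-1}$ for some K\"ahler form $\omega$ --- precisely when it pairs strictly positively with every nonzero nef class. Thus it suffices to prove
\[
\langle\alpha^{n-1}\rangle\cdot\gamma>\langle\beta^{n-1}\rangle\cdot\gamma\qquad\text{for every nonzero nef class }\gamma.
\]
To establish this I would argue by contradiction: suppose some nonzero nef class $\gamma_0$ satisfies $\langle\alpha^{n-1}\rangle\cdot\gamma_0\leq\langle\beta^{n-1}\rangle\cdot\gamma_0$. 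The key step is to feed $\gamma_0$ into Theorem \ref{thm mov-big}. Since $\gamma_0$ is nef it is movable, so for every $t>0$ with $t<\vol(\alpha)/\bigl(n\,\langle\alpha^{n-1}\rangle\cdot\gamma_0\bigr)$ we have $\vol(\alpha)-n\,\langle\alpha^{n-1}\rangle\cdot(t\gamma_0)>0$, and Theorem \ref{thm mov-big} produces a K\"ahler current in $\alpha-t\gamma_0$; in particular $\alpha-t\gamma_0$ is pseudo-effective. Pairing this pseudo-effective $(1,1)$-class with the movable class $\langle\beta^{n-1}\rangle$, again by the duality of \cite{BDPP13}, and letting $t$ approach the threshold, I obtain
\[
\gamma_0\cdot\langle\beta^{n-1}\rangle\leq\frac{n\,\langle\alpha^{n-1}\rangle\cdot\gamma_0}{\vol(\alpha)}\,\alpha\cdot\langle\beta^{n-1}\rangle.
\]
Combining this with the standing assumption $\langle\alpha^{n-1}\rangle\cdot\gamma_0\leq\langle\beta^{n-1}\rangle\cdot\gamma_0$ and with the strict positivity $\langle\alpha^{n-1}\rangle\cdot\gamma_0>0$ --- valid because $\langle\alpha^{n-1}\rangle$ is interior movable and $\gamma_0$ is a nonzero pseudo-effective class --- and dividing through by $\langle\alpha^{n-1}\rangle\cdot\gamma_0$, I arrive at $\vol(\alpha)\leq n\,\alpha\cdot\langle\beta^{n-1}\rangle$, contradicting the hypothesis. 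Hence the displayed numerical inequality holds for all nonzero nef $\gamma$, and $\Omega$ contains a strictly positive $(n-1,n-1)$-current.

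The main obstacle I anticipate lies in making the two cone-duality inputs fully rigorous in the transcendental setting: first, the identification of the interior of the cone of positive $(n-1,n-1)$-currents with the classes admitting a strictly positive representative, and second, the non-negativity of the pairing between a pseudo-effective $(1,1)$-class and a movable $(n-1,n-1)$-class. Both are governed by the structure theory of movable classes in \cite{BDPP13}, and I expect the genuinely analytic content of the argument to be concentrated entirely in the bigness criterion Theorem \ref{thm mov-big}, which is what converts the numerical threshold inequality into the K\"ahler current underlying the displayed bound.
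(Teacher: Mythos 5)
Your proposal is correct and follows essentially the same route as the paper: reduce via the duality $\overline{\mathcal{K}}^{\vee}=\mathcal{N}$ to showing $\langle\alpha^{n-1}\rangle\cdot\gamma>\langle\beta^{n-1}\rangle\cdot\gamma$ for all nonzero nef $\gamma$, and obtain the key inequality $n\,(\gamma\cdot\langle\alpha^{n-1}\rangle)(\alpha\cdot\langle\beta^{n-1}\rangle)\geq\vol(\alpha)\,(\gamma\cdot\langle\beta^{n-1}\rangle)$ by applying Theorem \ref{thm mov-big} to $\alpha-t\gamma$ and pairing the resulting K\"ahler current with $\langle\beta^{n-1}\rangle$. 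Your threshold-in-$t$ formulation is just the paper's scaling normalization $\alpha\cdot\langle\beta^{n-1}\rangle=N\cdot\langle\beta^{n-1}\rangle$ in different clothing, so the two arguments coincide in substance.
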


\section{Technical preliminaries}

\subsection{Resolution of singularities of positive currents}

Let $X$ be a compact complex manifold, and let $T$ be a $d$-closed almost positive $(1,1)$-current on $X$, that is, there exists a smooth $(1,1)$-form $\gamma$ such that $T\geq \gamma$. If $\gamma=0$, then $T$ is called a positive $(1,1)$-current and the class $\{T\}$ is called pseudo-effective; And if $\gamma$ is a hermitian metric, then $T$ is called a K\"ahler current and the class $\{T\}$ is called big; see \cite{Dem_AGbook} for the basic theory of positive currents.

Demailly's regularization theorem (see \cite{Dem92}) implies that we can always approximate the almost positive $(1,1)$-current $T$ by a family of almost positive closed $(1,1)$-currents $T_k$ with analytic singularities such that $T_k \geq \gamma- \varepsilon_k \omega$, where $\varepsilon_k \downarrow 0$ is a sequence of positive constants and $\omega$ is a fixed hermitian metric. In particular, when $T$ is a K\"ahler current, it can be approximated by a family of K\"ahler currents with analytic singularities.

When $T$ has analytic singularities along an analytic subvariety $V(\mathcal{I})$ where $\mathcal{I}\subset \mathcal{O}_X$ is a coherent ideal sheaf, by blowing up along $V(\mathcal{I})$ and then resolving the singularities, we get a modification $\mu: \widetilde{X}\rightarrow X$ such that $\mu^* T= \widetilde \theta + [D]$ where $\widetilde \theta$ is an almost positive smooth $(1,1)$-form with $\widetilde \theta \geq \mu^* \gamma$ and $D$ is an effective $\mathbb{R}$-divisor; see e.g. \cite[Theorem 3.1]{BDPP13}. In particular, if $T$ is positive, then $\widetilde \theta$ is a smooth positive $(1,1)$-form.  We call such a modification the log-resolution of singularities of $T$.

For almost positive $(1,1)$-current $T$, we can always decompose $T$ with respect to the Lebesgue measure; see e.g. \cite[Section 2.3]{Bou02a}. We write $T=T_{ac}+T_{sg}$ where $T_{ac}$ is the absolutely continuous part and $T_{sg}$ is the singular part. The absolutely part $T_{ac}$ can be seen as a form with $L_{loc}^1$ coefficients, and the wedge product $T_{ac}^k (x)$ makes sense for almost every point $x$. We always have $T_{ac}\geq \gamma$ since $\gamma$ is smooth. If $T$ has analytic singularities along $V$, then $T_{ac}=\textbf{1}_{X\backslash V}T$. However, in general $T_{ac}$ is not closed even if $T$ is closed (see \cite{Bou02b}). We have the following proposition.

\begin{proposition}
\label{prop log-resolution}
Let $T_1,..., T_k$ be $k$ almost positive closed $(1,1)$-currents with analytic singularities on $X$ and let $\psi$ be a smooth $(n-k,n-k)$-form. Let $\mu: \widetilde{X}\rightarrow X$ be a simultaneous log-resolution with $\mu^* T_i =\widetilde{\theta}_i +[D_i]$. Then
$$\int_X T_{1,ac}\wedge...\wedge T_{k,ac}\wedge \psi=\int_{\widetilde{X}}\widetilde{\theta}_1\wedge ...\wedge\widetilde{\theta}_k \wedge \mu^*\psi.$$
\begin{proof}
This is obvious since $\mu$ is an isomorphism outside 
a proper analytic subvariety and $T_{1,ac}\wedge...\wedge T_{k,ac}$ puts no mass on such subset and $\widetilde{\theta}_i$ is smooth on $\widetilde{X}$.
\end{proof}
\end{proposition}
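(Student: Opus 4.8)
The plan is to reduce both integrals to the integral of a single \emph{smooth} top-degree form over the common Zariski-open locus where $\mu$ is a biholomorphism, and then to discard the remaining analytic subvarieties as Lebesgue-null sets. Write $V_i\subset X$ for the analytic subvariety along which $T_i$ is singular, and let $V\subset X$ be a proper analytic subvariety containing every $V_i$ together with the locus over which $\mu$ fails to be an isomorphism; such a $V$ exists because $\mu$ is a modification and is therefore biholomorphic outside a proper analytic subvariety. Put $E=\mu^{-1}(V)$, a proper analytic subvariety of $\widetilde{X}$. Then $V$ and $E$ are Lebesgue-null, and $\mu$ restricts to a biholomorphism $\widetilde{X}\backslash E \rightarrow X\backslash V$.

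First I would treat the right-hand side. Each $\widetilde{\theta}_i$ is a smooth $(1,1)$-form on all of $\widetilde{X}$ and $\mu^*\psi$ is smooth, so the integrand $\widetilde{\theta}_1\wedge\dots\wedge\widetilde{\theta}_k\wedge\mu^*\psi$ is a smooth top-degree form on the compact manifold $\widetilde{X}$; since $E$ is null, its integral over $\widetilde{X}$ equals its integral over $\widetilde{X}\backslash E$. On $\widetilde{X}\backslash E$ the support of each $D_i$ is disjoint from the domain (as $\operatorname{supp}D_i\subseteq E$), so $[D_i]=0$ there, and the identity $\mu^* T_i=\widetilde{\theta}_i+[D_i]$ gives $\mu^* T_i=\widetilde{\theta}_i$. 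Because $\mu$ is a biholomorphism over $X\backslash V$ and the $T_i$ are smooth there, pullback commutes with the wedge product, and one obtains $\widetilde{\theta}_1\wedge\dots\wedge\widetilde{\theta}_k\wedge\mu^*\psi=\mu^*(T_1\wedge\dots\wedge T_k\wedge\psi)$ on $\widetilde{X}\backslash E$.

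Next I would treat the left-hand side. Recall from the discussion preceding the proposition that analytic singularities force $T_{i,ac}=\mathbf{1}_{X\backslash V_i}T_i$, and that $T_i$ restricts to a genuine smooth form on $X\backslash V_i$. Hence $T_{1,ac}\wedge\dots\wedge T_{k,ac}$ agrees with $\mathbf{1}_{X\backslash V}\,T_1\wedge\dots\wedge T_k$ up to the null set $V$, i.e.\ it is a smooth form on $X\backslash V$ extended by zero; this is the precise sense in which the product puts no mass on $V$. Therefore $\int_X T_{1,ac}\wedge\dots\wedge T_{k,ac}\wedge\psi=\int_{X\backslash V}T_1\wedge\dots\wedge T_k\wedge\psi$, the integral of a smooth form over the open manifold $X\backslash V$. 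Applying the change-of-variables formula for the biholomorphism $\mu\colon\widetilde{X}\backslash E\rightarrow X\backslash V$ and combining with the previous paragraph yields the stated equality.

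The step that deserves the most care --- the ``obvious'' in the one-line proof --- is the assertion that the $L^1_{loc}$ product $T_{1,ac}\wedge\dots\wedge T_{k,ac}$ is genuinely integrable and carries no mass on $V$, rather than being a merely formal pointwise product of $L^1$ coefficients (recall that a product of $L^1$ functions need not be $L^1$). What makes it work is precisely that each $T_i$ is \emph{smooth}, not merely of $L^1$ class, on the complement $X\backslash V_i$: on $X\backslash V$ the product is an honest smooth form, and its finite total mass is certified by the fact that under $\mu$ it corresponds to the smooth form $\widetilde{\theta}_1\wedge\dots\wedge\widetilde{\theta}_k\wedge\mu^*\psi$, integrable on the compact $\widetilde{X}$. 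I would therefore invoke the analytic-singularities hypothesis explicitly here, since it is what upgrades ``$L^1$ coefficients'' to ``smooth off a null set'' and thereby legitimizes both the integrability and the removal of $V$ and $E$.
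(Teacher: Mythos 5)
Your proof is correct and takes essentially the same route as the paper's own one-line argument, whose three ingredients ($\mu$ is an isomorphism off a proper analytic subvariety, $T_{1,ac}\wedge\dots\wedge T_{k,ac}$ puts no mass on such subsets because each $T_i$ is smooth off its singular locus, and the $\widetilde{\theta}_i$ are smooth on $\widetilde{X}$) are exactly the ones you expand. You have simply spelled out the change-of-variables and integrability details that the paper declares obvious.
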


\subsection{Movable cohomology classes}

We first briefly recall the definition of divisorial Zariski decomposition and the definition of movable $(1,1)$-class on compact complex manifold; see \cite{Bou04}, see also \cite{Nak04} for the algebraic approach.

Let $X$ be a compact complex manifold of dimension $n$ and let $\alpha $ be a pseudo-effective $(1,1)$-class over $X$, then one can always  associate an effective divisor $N(\alpha):=\sum \nu(\alpha, D)D$ to $\alpha$ where the sum ranges among all prime divisors on $X$. The class $\{N(\alpha)\}$ is called the negative part of $\alpha$. And $Z(\alpha)=\alpha-\{N(\alpha)\}$ is called the positive part of $\alpha$. The decomposition $\alpha=Z(\alpha)+\{N(\alpha)\}$ then is the divisorial Zariski decomposition of $\alpha$.

\begin{definition}
Let $X$ be a compact complex manifold of dimension $n$, and let $\alpha$ be a pseudo-effective $(1,1)$-class. Then $\alpha$ is called movable if $\alpha=Z(\alpha)$.
\end{definition}

\begin{proposition}
\label{prop modified kahler}
(see \cite[Proposition 2.3]{Bou04})
Let $\alpha$ be a movable $(1,1)$-class and let $\omega$ be a K\"ahler class, then for any $\delta>0$ there exist a modification $\mu: Y\rightarrow X$ and a K\"ahler class $\widetilde{\omega}$ over $Y$ such that $\alpha+\delta \omega=\mu_* \widetilde{\omega}$.
\end{proposition}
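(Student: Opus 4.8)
The plan is to exhibit the current directly, using Proposition~\ref{prop modified kahler} to manufacture a strictly positive $(n-1,n-1)$-current inside $\langle\alpha^{n-1}\rangle$ and then to retain a definite amount of that positivity after subtracting $\langle\beta^{n-1}\rangle$. First I would reduce to the movable case: since $\alpha\cdot\langle\beta^{n-1}\rangle\ge 0$, the hypothesis forces $\vol(\alpha)>0$, so $\alpha$ is big; and replacing $\alpha,\beta$ by their positive parts $Z(\alpha),Z(\beta)$ leaves $\vol(\alpha)$, $\langle\alpha^{n-1}\rangle$ and $\langle\beta^{n-1}\rangle$ unchanged while not increasing $\alpha\cdot\langle\beta^{n-1}\rangle$ (the negative parts are effective and pair nonnegatively). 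Hence I may assume $\alpha$ and $\beta$ movable. I also recall that a class $\Theta\in H^{n-1,n-1}(X,\mathbb R)$ carries a strictly positive $(n-1,n-1)$-current precisely when $\Theta-\varepsilon\,\omega^{n-1}$ is represented by a closed positive current for some $\varepsilon>0$ and a fixed K\"ahler form $\omega$.

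Next I would build the positive piece. Applying Proposition~\ref{prop modified kahler} to the movable class $\alpha$, for each $\delta>0$ I obtain a modification $\mu:Y\to X$ and a K\"ahler class $\widetilde\omega$ on $Y$ with $\alpha+\delta\omega=\mu_*\widetilde\omega$. Since $\alpha+\delta\omega$ is then a modified K\"ahler class, its movable product is computed on $Y$ by the genuine power of the K\"ahler representative, $\langle(\alpha+\delta\omega)^{n-1}\rangle=\mu_*(\widetilde\omega^{\,n-1})$. Choosing $\widetilde\omega\ge c\,\mu^*\omega$ and using the projection formula $\mu_*\mu^*(\omega^{n-1})=\omega^{n-1}$, the pushforward satisfies $\langle(\alpha+\delta\omega)^{n-1}\rangle\ge c^{\,n-1}\,\omega^{n-1}$ as currents on $X$. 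Thus $\langle\alpha^{n-1}\rangle$ is itself represented by a strictly positive $(n-1,n-1)$-current; the real task is to keep a fixed fraction of this strict positivity after removing $\langle\beta^{n-1}\rangle$.

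The heart of the matter is therefore a comparison estimate $\langle\beta^{n-1}\rangle\le(1-\varepsilon)\,\langle\alpha^{n-1}\rangle$ in the sense of closed positive $(n-1,n-1)$-currents, for some $\varepsilon>0$; granting it, the previous step gives $\langle\alpha^{n-1}\rangle-\langle\beta^{n-1}\rangle\ge\varepsilon\,c^{\,n-1}\,\omega^{n-1}$, and we conclude after letting $\delta\to0$ and extracting a weak limit, the uniform lower bound surviving the limit. To establish the comparison I would again use Proposition~\ref{prop modified kahler}, now on $\beta$, to replace $\langle\beta^{n-1}\rangle$ by $\mu_*(\widetilde\sigma^{\,n-1})$ for a K\"ahler class $\widetilde\sigma$; by the projection formula the hypothesis reads $\vol(\mu^*\alpha)>n\,\mu^*\alpha\cdot\widetilde\sigma^{\,n-1}$ on the modification, i.e. it controls exactly the intersection $\alpha\cdot\langle\beta^{n-1}\rangle=\tfrac1n\frac{d}{dt}\big|_{t=0}\vol(\beta+t\alpha)$. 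Feeding this, together with the differentiability of the volume from Proposition~\ref{prop equiv intro} and a reverse-type Khovanskii--Teissier inequality (which is where Theorem~\ref{thm mov-big} is really used), should produce the strict domination of $\langle\beta^{n-1}\rangle$ by $\langle\alpha^{n-1}\rangle$.

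The main obstacle is precisely this comparison. The naive idea, namely telescoping $\langle\alpha^{n-1}\rangle-\langle\beta^{n-1}\rangle$ as $(\alpha-\beta)$ wedged with $\sum_{j}\langle\alpha^{j}\rangle\cdot\langle\beta^{\,n-2-j}\rangle$ and invoking Theorem~\ref{thm mov-big} to make $\alpha-\beta$ a K\"ahler current, does not work: it inevitably produces the intersection number $\langle\alpha^{n-1}\rangle\cdot\beta$, whereas the hypothesis controls only $\alpha\cdot\langle\beta^{n-1}\rangle$, and these two numbers already differ once $n\ge 3$ (they coincide when $n=2$, where the statement collapses to Theorem~\ref{thm mov-big}). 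Moreover the movable product is not multilinear, so such a telescoping is not even literally available. Thus the decisive step is the transposed, reverse-type Khovanskii--Teissier estimate matching the constant $n$ against $\alpha\cdot\langle\beta^{n-1}\rangle$; the modification-to-K\"ahler reductions above serve precisely to bring this estimate down to the case of honest K\"ahler classes, where the mixed positivity of smooth forms and Theorem~\ref{thm mov-big} can be applied.
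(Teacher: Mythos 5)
Your proposal does not address the statement it was supposed to prove. The statement in question is Proposition \ref{prop modified kahler}: for a movable class $\alpha$ and a K\"ahler class $\omega$, the class $\alpha+\delta\omega$ is the pushforward $\mu_*\widetilde{\omega}$ of a K\"ahler class under some modification. This is a structural fact about the divisorial Zariski decomposition and the modified K\"ahler cone, which the paper does not prove at all but quotes directly from Boucksom (\cite[Proposition 2.3]{Bou04}); any proof of it lives entirely inside the theory of modified nef and modified K\"ahler classes (the modified nef cone is the closure of the modified K\"ahler cone, and adding $\delta\omega$ pushes a modified nef class into that cone's interior). Nothing of this sort appears in your text.

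What you have written is instead an attack on Theorem \ref{thm morse n-1}, the Morse-type criterion for $\langle\alpha^{n-1}\rangle-\langle\beta^{n-1}\rangle$. Worse, read as a proof of Proposition \ref{prop modified kahler} it is circular: you invoke Proposition \ref{prop modified kahler} itself twice (once for $\alpha$, once for $\beta$) as a known tool. Even judged on its own terms as a proof of Theorem \ref{thm morse n-1}, the argument is incomplete: you concede that the decisive comparison $\langle\beta^{n-1}\rangle\le(1-\varepsilon)\langle\alpha^{n-1}\rangle$ is an unresolved ``obstacle,'' hoping that some reverse Khovanskii--Teissier estimate ``should'' yield it. For what it is worth, the paper's actual proof of that theorem avoids any such current-level comparison: it argues by contradiction through the cone duality $\overline{\mathcal{K}}^\vee=\mathcal{N}$ of \cite{DP04}, pairing against a hypothetical nef class $N_\epsilon$ and using Theorem \ref{thm mov-big} to obtain the numerical inequality $n\,(N\cdot\langle\alpha^{n-1}\rangle)(\alpha\cdot\langle\beta^{n-1}\rangle)\geq\langle\alpha^{n}\rangle\,(N\cdot\langle\beta^{n-1}\rangle)$ for nef $N$. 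In short: to answer the actual question you must either reproduce Boucksom's argument or cite it; your current text proves neither the proposition nor, as it stands, the theorem it is really aimed at.
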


\begin{remark}
In \cite{Bou04}, $\alpha$ is called modified nef if $\alpha=Z(\alpha)$ (see \cite[Definition 2.2 and Proposition 3.8]{Bou04}). Here we call it movable in order to keep the same notation as the algebraic geometry situation. Let $L$ be a  line bundle over a smooth projective variety and let $\alpha=c_1(L)$, then $\alpha$ is modified nef if and only if $L$ is movable.
\end{remark}

\subsection{Movable intersections}
We take the opportunity to point out the well known fact that the several definitions of movable intersections of pseudo-effective $(1,1)$-classes over compact K\"ahler manifold coincide; see \cite{Bou02b, BDPP13, BEGZ10} for the analytic constructions over compact K\"ahler manifold. And it also coincides with the algebraic construction of \cite{BFJ09} on smooth projective variety for specified degrees. We remark that it is helpful to know the definition of movable intersections of pseudo-effective $(1,1)$-classes can be interpreted in several equivalent ways.

Let $\alpha_1,...,\alpha_k\in H^{1,1}(X, \mathbb{R})$ be pseudo-effective classes on a compact K\"ahler manifold of dimension $n$. By the common basic property of these definitions of movable intersection products, we only need to verify the respectively defined positive $(k,k)$ cohomology classes $\langle\alpha_1\cdot...\cdot \alpha_k\rangle$ coincide when all the classes are big. Firstly, by the definition of Riemann-Zariski space, it is clear from \cite[Theorem 3.5]{BDPP13} and
\cite[Definition 2.5]{BFJ09} that the two definitions of movable intersection products are the same when $X$ is a smooth projective variety defined over $\mathbb{C}$ and all the classes $\alpha_i$ are in the N\'{e}ron-Severi space and $k=1, n-1, n$. Next, by testing on $\partial\bar \partial$-closed smooth positive $(n-k, n-k)$-forms, \cite[Theorem 3.5]{BDPP13}, \cite[Definition 1.17, Proposition 1.18 and Proposition 1.20]{BEGZ10} and \cite[Definition 3.2.1 and Lemma 3.2.5]{Bou02b} imply that these three definitions give the same positive cohomology class in $H^{k,k}(X, \mathbb{R})$; see \cite[Proposition 1.10]{principato2013mobile} for the detailed proof.

Inspired by \cite[Definition 1.3, Theorem 1.5 and Conjecture 2.3]{BDPP13}, the definition of movable $(n-1,n-1)$-classes in the K\"ahler setting can be formulated as following.

\begin{definition}
Let $X$ be a compact K\"ahler manifold of dimension $n$, and let $\gamma\in H^{n-1, n-1}(X, \mathbb{R})$. Then $\gamma$ is called a movable $(n-1,n-1)$-class if it is in the closure of the convex cone generated by cohomology classes of the form $\langle \alpha_1\cdot...\cdot \alpha_{n-1}\rangle$ with every $\alpha_i$ pseudo-effective.
\end{definition}

\begin{remark}
When $X$ is a smooth projective variety of dimension $n$, \cite[Theorem 1.5]{BDPP13} implies that the rational movable $(n-1,n-1)$-classes are in the cone of movable curves.
\end{remark}


\section{Proof of the main results}

Now let us begin to prove our main results. We first give a Morse-type bigness criterion for the difference of two pseudo-effective $(1,1)$-classes by using movable intersections. To this end, we need some properties of movable intersections.

\begin{lemma}
\label{lem nef same products}
Let $X$ be a compact K\"ahler manifold of dimension $n$, and let $\alpha_1,...,\alpha_{n-1}, \beta\in H^{1,1}(X,\mathbb{R})$ be pseudo-effective classes with $\beta$ nef. Then we have
$$\langle \alpha_1 \cdot...\cdot\alpha_{n-1}\cdot \beta \rangle=\langle \alpha_1 \cdot...\cdot\alpha_{n-1}\rangle \cdot \beta.$$
\end{lemma}

\begin{proof}
By the continuity of positive products, by taking limits, we can assume that $\alpha_1,...,\alpha_{n-1}$ are big and $\beta$ is K\"ahler.

First, note that we always have $\langle \alpha_1 \cdot...\cdot\alpha_{n-1}\cdot \beta \rangle \leq \langle \alpha_1 \cdot...\cdot\alpha_{n-1}\rangle \cdot \beta$ when $\beta$ is only assumed to be big (or pseudo-effective). By \cite[Theorem 3.5]{BDPP13}, there exists a sequence of simultaneous log-resolutions $\mu_m: X_m \rightarrow X$ with $\mu_m ^* \alpha_i = \omega_{i, m}+ [D_{i,m}]$ and
$\mu_m ^* \beta= \gamma_m +[E_m]$ such that
\begin{align*}
\langle \alpha_1 \cdot...\cdot\alpha_{n-1}\cdot \beta \rangle
= \limsup_{m \rightarrow \infty}\ (\omega_{1, m}\cdot...\cdot\omega_{n-1, m} \cdot \gamma_m),
\end{align*}
and
\begin{align*}
\langle \alpha_1 \cdot...\cdot\alpha_{n-1}\rangle\cdot \beta
= \limsup_{m \rightarrow \infty}\ (\omega_{1, m}\cdot...\cdot\omega_{n-1, m}) \cdot \mu_m ^* \beta.
\end{align*}

When $\beta$ is K\"ahler, the class $\gamma_m$ is just $\mu_m ^* \beta$.
In conclusion, if $\beta$ is nef then we have the desired equality
$$\langle \alpha_1 \cdot...\cdot\alpha_{n-1}\cdot \beta \rangle=\langle \alpha_1 \cdot...\cdot\alpha_{n-1}\rangle \cdot \beta.$$
\end{proof}

\begin{lemma}
\label{lem mov ineq}
Let $X$ be a compact K\"ahler manifold of dimension $n$. Let $\alpha_1, ..., \alpha_{n-1}$ be pseudo-effective $(1,1)$-classes, and let $\pi: Y\rightarrow X$ be a modification. Then for any K\"ahler class $\widehat{\omega}$ on $Y$ we have
$$\langle\alpha_1 \cdot ...\cdot \alpha_{n-1}\rangle \cdot \pi_*  \widehat{\omega} \geq \langle\pi^*\alpha_1 \cdot ...\cdot \pi^*\alpha_{n-1}\rangle \cdot \widehat{\omega}.$$
\end{lemma}

\begin{proof}
First, we have $\langle\alpha_1 \cdot ...\cdot \alpha_{n-1}\rangle \cdot \pi_*  \widehat{\omega} \geq \langle\alpha_1 \cdot ...\cdot \alpha_{n-1} \cdot \pi_*  \widehat{\omega} \rangle$ as noted in the proof of Lemma \ref{lem nef same products}.
On the other hand, we have $\langle\alpha_1 \cdot ...\cdot \alpha_{n-1} \cdot \pi_*  \widehat{\omega} \rangle = \langle\pi^*\alpha_1 \cdot ...\cdot \pi^*\alpha_{n-1} \cdot \pi^* (\pi_*  \widehat{\omega}) \rangle$. Since $\pi^* (\pi_*  \widehat{\omega})-\widehat{\omega}$ is pseudo-effective, we get that
\begin{align*}
\langle\alpha_1 \cdot ...\cdot \alpha_{n-1}\rangle \cdot \pi_*  \widehat{\omega} &\geq \langle\pi^*\alpha_1 \cdot ...\cdot \pi^*\alpha_{n-1} \cdot \widehat{\omega} \rangle\\
&=\langle\pi^*\alpha_1 \cdot ...\cdot \pi^*\alpha_{n-1}\rangle  \cdot \widehat{\omega}.
\end{align*}

\end{proof}

\subsection{Theorem \ref{thm mov-big}}

\begin{proof}[Proof of Theorem \ref{thm mov-big}]
Fix a K\"ahler metric $\omega$ on $X$, and denote the K\"ahler class by the same symbol. By continuity and the definition of movable intersections, we have
$$
\lim_{\delta \rightarrow 0} \langle(\alpha+\delta\omega)^n \rangle-n\langle(\alpha+\delta\omega)^{n-1}\rangle
\cdot(\beta+\delta\omega)=\langle\alpha^n \rangle-n\langle\alpha^{n-1}\rangle\cdot\beta.$$
So $\langle(\alpha+\delta\omega)^n \rangle-n\langle(\alpha+\delta\omega)^{n-1}\rangle
\cdot(\beta+\delta\omega)>0$ for small $\delta>0$. Note also that $\alpha-\beta=(\alpha+\delta\omega)-(\beta+\delta\omega)$. Thus to prove the bigness of the class $\alpha-\beta$, we can assume $\alpha$ is big, and assume $\beta=\mu_* \widetilde{\omega}$ for some modification $\mu:Y\rightarrow X$ and some K\"ahler class $\widetilde{\omega}$ on $Y$ at the beginning.
By Lemma \ref{lem nef same products}
and Lemma \ref{lem mov ineq}, our assumption then implies that
$$
\langle(\mu^*\alpha)^n\rangle-n \langle(\mu^*\alpha)^{n-1}\cdot \widetilde{\omega}\rangle>0.
$$

We claim that this implies there exists a K\"ahler current in the class $\mu^*\alpha-\widetilde{\omega}$, which then implies the bigness of the class $\alpha-\beta
=\mu_*(\mu^*\alpha-\widetilde{\omega})$.

Now it is reduced to prove the case when $\alpha$ is big and $\beta$ is K\"ahler. Let $\omega$ be a K\"ahler metric in the class $\beta$. The definition of movable intersections implies there exists some K\"ahler current $T\in \alpha$ with analytic singularities along some subvariety $V$ such that
\begin{align*}
\int_{X\setminus V} T^n -n\int_{X\setminus V} T^{n-1}\wedge \omega >0.
\end{align*}
Let $\pi: Z\rightarrow X$ be the log-resolution of the current $T$ with $\pi^* T= \theta + [D]$ such that $\theta$ is a smooth positive $(1,1)$-form on $Z$. By Proposition \ref{prop log-resolution} we have
\begin{align*}
\int_Z \theta^n -n\int_Z \theta^{n-1}\wedge \pi^*\omega >0.
\end{align*}

The result of \cite{popovici2016sufficientbig} then implies that there exists a K\"ahler current in the class $\{\theta-\pi^*\omega\}$. As $\pi^* \alpha= \{\theta + [D]\}$, this proves the bigness of the class $\alpha-\beta$.

Thus we finish the proof that there exists a K\"ahler current in the general case when $\alpha$ is pseudo-effective and $\beta$ is movable.
\end{proof}

\begin{remark}
Indeed, by the above argument we have the following bigness criterion: for any pseudo-effective $(1,1)$ classes $\alpha, \beta$,
$$\langle \alpha^{n}\rangle- n \langle \alpha^{n-1} \cdot \beta \rangle >0 \Rightarrow \vol(Z(\alpha)-Z(\beta))>0.$$

Since the positive products $\langle \alpha^{n}\rangle$ and $\langle\alpha^{n-1} \cdot \beta\rangle$ depend only on the positive parts of $\alpha, \beta$ (see e.g. \cite[Proposition 3.2.10]{Bou02b}), we can assume that $\alpha, \beta$ are movable at the beginning. And by taking limits, we can also assume $\beta = \pi_* \widehat{\omega}$ for some modification $\pi$. Then we have
\begin{align*}
  \langle (\pi^* \alpha)^{n}\rangle - n\langle (\pi^* \alpha)^{n-1}\rangle \cdot \widehat{\omega} &= \langle (\pi^* \alpha)^{n}\rangle - n\langle (\pi^* \alpha)^{n-1}  \cdot \widehat{\omega}\rangle\\
  & \geq \langle (\pi^* \alpha)^{n}\rangle -n\langle (\pi^* \alpha)^{n-1}  \cdot \pi^* \beta\rangle\\
  & >0,
\end{align*}
which implies that $\alpha -\beta = \pi_* (\pi^* \alpha - \widehat{\omega})$ is big.

In the case of Example \ref{eg trapani}, since $R$ is nef and $E$ is exceptional, we have
$\langle R^2\rangle-2 \langle R\cdot 2E\rangle=R^2>0 $, we then get the bigness of $Z(R)-Z(2E)=R$.
\end{remark}

\subsection{Proposition \ref{prop equiv intro}}
Towards the transcendental version of Theorem \ref{thm morse line bdl}, we give the proof of Proposition \ref{prop equiv intro} which is essentially already known in \cite{BFJ09}.

\begin{proof} [Proof of Proposition \ref{prop equiv intro}]
It is obvious that (2)$\Rightarrow$(1). We will show that (1)$\Rightarrow$(2) and (1)$\Leftrightarrow$(3), which then proves the equivalence of the statements.

Firstly, we prove (1)$\Rightarrow$(2). To prove (2), we only need to consider the case when $\langle\alpha^n\rangle -n\langle\alpha^{n-1}\rangle \cdot \beta>0$ and $\beta$ is big and movable. By Theorem \ref{thm mov-big} we know that $\alpha-\beta$ is big.
First assume that $\beta$ is K\"ahler. By taking a sequence of suitable Fujita approximations $\mu_m$ satisfying $\mu_m ^* \alpha = \omega_m + [E_m]$ and
\begin{equation*}
  \langle \alpha^n \rangle = \limsup_{m \rightarrow \infty} \omega_m ^n, \ \ \langle \alpha^{n-1} \rangle = \limsup_{m \rightarrow \infty} (\mu_m)_* (\omega_m ^{n-1}).
\end{equation*}
Then (1) implies that $\vol(\omega_m - \mu_m ^* \beta) \geq \omega_m ^n - n \omega_m ^{n-1}\cdot \mu_m ^* \beta$. Thus (1) implies that
\begin{equation*}
  \vol(\alpha -\beta) \geq \vol(\alpha) - n \langle \alpha ^{n-1}\rangle \cdot \beta
\end{equation*}
holds in the case when $\beta$ is K\"ahler. In the general case, we can assume that $\beta = \pi_* \widehat{\omega}$. Then we have
\begin{align*}
  \vol(\alpha - \beta) \geq \vol(\pi^* \alpha - \widehat{\omega}) &\geq \vol(\pi^* \alpha) - n\langle (\pi^* \alpha) ^{n-1}\rangle \cdot \widehat{\omega}\\
  & \geq \vol(\pi^* \alpha) - n\langle  \alpha ^{n-1}\rangle \cdot \pi_*\widehat{\omega}\ \ \textrm{by Lemma \ref{lem mov ineq}}.
\end{align*}

Next, note that the implication (3)$\Rightarrow$(1) is obvious, and the implication (1)$\Rightarrow$(3) is just \cite[Section 3.2]{BFJ09}.
For reader's convenience, we briefly recall and repeat the arguments of \cite[Section 3.2]{BFJ09}. By \cite[Corollary 3.4]{BFJ09} (or the proof of \cite[Theorem 4.1]{BDPP13}), (1) implies
$$\vol(\beta+ t\gamma)\geq
\beta^n +tn\beta^{n-1}\cdot \gamma -Ct^2
$$
for an arbitrary nef class $\beta$, an arbitrary $(1,1)$-class $\gamma$ and $t\in [0,1]$. Here the constant $C$ depends only on the class $\beta,\gamma$; more precisely, the constant $C$ depends on the volume of a big and nef class $\omega$ such that $\omega-\beta$ is pseudo-effective and $\omega \pm \gamma$ is nef.

Now take a log-resolution $\mu^*\alpha = \beta + [E]$, then we have
\begin{align*}
\vol(\alpha+t\gamma)&\geq \vol(\beta+ t \mu^* \gamma)\\
&\geq \beta^n +tn\beta^{n-1}\cdot \mu^*\gamma -Ct^2\\
&= \beta^n +tn\mu_*(\beta^{n-1})\cdot \gamma -Ct^2.
\end{align*}
Note that the constant $C$ does not depend on the resolution $\mu$, since $\mu^*\omega-\beta$ is pseudo-effective and $\mu^*\omega \pm\mu^*\gamma$ is nef if $\omega$ has similar property with respect to $\alpha, \gamma$. And we have $\vol(\mu^* \omega)=\vol(\omega)$.
By taking limits of some sequence of log-resolutions, we get
\begin{align*}
\vol(\alpha+t\gamma)
\geq \vol(\alpha) +tn\langle\alpha^{n-1}\rangle\cdot \gamma -Ct^2.
\end{align*}
Replace $\gamma$ by $-\gamma$, we then get
\begin{align*}
\vol(\alpha)
\geq \vol(\alpha+t\gamma) -tn\langle(\alpha+t\gamma)^{n-1}\rangle\cdot \gamma -Ct^2.
\end{align*}
Since $\alpha$ is big, by the continuity of movable intersections (see e.g. \cite[Theorem 3.5]{BDPP13}) we have
\begin{align*}
\lim_{t\rightarrow 0} \langle(\alpha+t\gamma)^{n-1}\rangle =\langle \alpha^{n-1} \rangle.
\end{align*}
Then (3) follows easily from the above inequalities.
\end{proof}

\begin{remark}
It is shown in \cite{Den15} that the $\mathcal{C}^1$ differentiability of the volume function for transcendental $(1,1)$-classes holds on compact K\"ahler surfaces. And it is used to construct the Okounkov bodies of transcendental $(1,1)$-classes over compact K\"ahler surfaces.
\end{remark}

\subsection{Theorem \ref{thm line mov-morse}}

The algebraic Morse inequality tells us that, if $L$ and $F$ are two nef line bundles, then $$\vol(L-F)\geq L^n -nL^{n-1}\cdot F.$$ Recently, \cite{Tra11} generalizes this result to the case when $F$ is only movable. Assume that $L$ is nef and $F$ is pseudo-effective, and let $F=Z(F)+N(F)$ be the divisorial Zariski decomposition of $F$. Then
\cite[Corollary 3.2]{Tra11} shows that
\begin{equation*}
\vol(L-Z(F))\geq L^n-nL^{n-1}\cdot Z(F).
\end{equation*}
Moreover, if we write the negative part $N(F)=\sum_{j}\nu_j D_j$ where $\nu_j>0$ and let $u$ be a nef class on $X$ such that $c_1 (\mathcal{O}_{TX}(1)) +\pi^* u$ is a nef class on $\mathbb{P}(T^* X)$.
Then \cite[Theorem 3.3]{Tra11} also gives a lower bound for $\vol(L-F)$:
\begin{equation*}
\vol(L-F)\geq L^n-nL^{n-1}\cdot Z(F)-n\sum_j (L+\nu_j u)^{n-1}\cdot \nu_j D_j.
\end{equation*}

Our next result shows that $L$ can be any pseudo-effective line bundle, where we restate Theorem \ref{thm line mov-morse}.

\begin{theorem}
\label{thm morse line bdl}
Let $X$ be a smooth projective variety of dimension $n$, and let $L, M$ be two pseudo-effective line bundles with $M$ movable. Then we have $$\vol(L-M)\geq \vol(L)-n \langle L^{n-1} \rangle \cdot M.$$
\end{theorem}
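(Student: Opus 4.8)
The plan is to reduce Theorem \ref{thm morse line bdl} to the $\mathcal{C}^1$ differentiability of the volume function for line bundles (\cite[Theorem A]{BFJ09}) — which is precisely the algebraic, and unconditional, incarnation of statement (3) of Proposition \ref{prop equiv intro} — and then to integrate the derivative of $\vol$ along the segment joining $L$ to $L-M$, keeping the integrand under control by the monotonicity of movable intersections from Corollary \ref{cor mov ineq}. This is essentially the implication $(3)\Rightarrow(2)$ of Proposition \ref{prop equiv intro} carried out in the setting where differentiability is already a theorem, combined with the identity $\langle L^n\rangle=\vol(L)$.

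First I would dispose of the case where $L$ is not big. Then $\vol(L)=0$, while $\langle L^{n-1}\rangle\cdot M\geq 0$ since $\langle L^{n-1}\rangle$ is a movable class and $M$ is pseudo-effective; hence the right-hand side is $\leq 0\leq\vol(L-M)$ and there is nothing to prove. So I may assume $L$ is big. Set $\varphi(t)=\vol(L-tM)$ for $t\geq 0$ and let $t_0=\sup\{t\geq 0: L-tM \text{ is big}\}$, which is positive because the big cone is open. On the locus where $L-tM$ is big, \cite[Theorem A]{BFJ09} gives that $\varphi$ is $\mathcal{C}^1$ with
$$\varphi'(t)=-n\,\langle (L-tM)^{n-1}\rangle\cdot M.$$
Since $L-(L-tM)=tM$ is pseudo-effective for $t\geq 0$ and $M$ is movable, applying Corollary \ref{cor mov ineq} repeatedly, one factor at a time, yields the key estimate
$$\langle (L-tM)^{n-1}\rangle\cdot M\ \leq\ \langle L^{n-1}\rangle\cdot M \qquad (t\geq 0).$$

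Finally I would integrate, splitting on the position of $t_0$. If $t_0\geq 1$, then $\varphi$ is $\mathcal{C}^1$ on $[0,1)$, continuous up to $1$, and the fundamental theorem of calculus together with the estimate above gives
$$\vol(L-M)=\vol(L)-n\int_0^1 \langle (L-tM)^{n-1}\rangle\cdot M\,dt\ \geq\ \vol(L)-n\langle L^{n-1}\rangle\cdot M.$$
If instead $t_0<1$, then $L-M$ is not big, so $\vol(L-M)=0$; using continuity of the volume on the pseudo-effective cone to get $\varphi(t_0)=0$, the same computation gives
$$\vol(L)=n\int_0^{t_0}\langle (L-tM)^{n-1}\rangle\cdot M\,dt\ \leq\ n\,t_0\,\langle L^{n-1}\rangle\cdot M\ \leq\ n\langle L^{n-1}\rangle\cdot M,$$
so the right-hand side is $\leq 0=\vol(L-M)$, and the inequality again holds.

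The main obstacle I anticipate is the careful bookkeeping at the boundary of the big cone: one must know that $\varphi$ is genuinely $\mathcal{C}^1$ on the open big locus, that it extends continuously to $t_0$ with $\varphi(t_0)=0$, and that the integrand stays integrable there. The remaining technical point is the iterated application of Corollary \ref{cor mov ineq} needed to pass from $\langle (L-tM)^{n-1}\rangle$ to $\langle L^{n-1}\rangle$. Everything ultimately hinges on having the differentiability of $\vol$ for line bundles as an unconditional input, which is exactly what frees the argument from the still-open transcendental Morse inequality.
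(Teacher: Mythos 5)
Your proof is correct, and its engine --- differentiating $t\mapsto\vol(L-tM)$ via \cite[Theorem A]{BFJ09}, bounding the derivative $-n\langle(L-tM)^{n-1}\rangle\cdot M$ from below using the monotonicity of Corollary \ref{cor mov ineq}, and integrating over the segment --- is exactly the implication (3)$\Rightarrow$(2) of Proposition \ref{prop equiv intro}, which is precisely how the paper proves this theorem. The one genuine point of divergence is how the segment is kept inside the big cone, where the derivative formula is valid: the paper first reduces to the case $\vol(L)-n\langle L^{n-1}\rangle\cdot M>0$ and then invokes Theorem \ref{thm mov-big} (hence, ultimately, Popovici's theorem) to guarantee that $L-tM$ is big for all $t\in[0,1]$; you instead set $t_0=\sup\{t\geq 0: L-tM\ \text{is big}\}$ and handle the exit from the big cone directly, using continuity of $\vol$ to get $\varphi(t_0)=0$ and the same upper bound $\langle(L-tM)^{n-1}\rangle\cdot M\leq\langle L^{n-1}\rangle\cdot M$ to conclude $\vol(L)\leq n\,t_0\,\langle L^{n-1}\rangle\cdot M$ when $t_0<1$. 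This makes the line-bundle statement logically independent of the transcendental bigness criterion, at the cost of the boundary bookkeeping you already flag (all of which checks out: $\varphi$ is $\mathcal{C}^1$ on $[0,t_0)$ with bounded derivative and continuous at $t_0$, so the fundamental theorem of calculus applies). The iterated, one-factor-at-a-time use of Corollary \ref{cor mov ineq}, justified by the symmetry of the movable product, is the same step the paper performs implicitly when it writes $\langle(\alpha-t\beta)^{n-1}\rangle\cdot\beta\leq\langle\alpha^{n-1}\rangle\cdot\beta$.
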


\begin{proof}
This follows directly from Proposition \ref{prop equiv intro}.
\end{proof}

\begin{remark}
When $M$ is nef and $L$ is pseudo-effective, Theorem \ref{thm morse line bdl} can be also proved by using the singular Morse inequalities for line bundles (see \cite{Bon93}). Without loss of generality, we can assume $L$ is big and $M$ is ample. Let $\omega\in c_1(M)$ be a K\"ahler metric. For any K\"ahler current $T \in c_1(L)$ with analytic singularities, $T - \omega$ is an almost positive curvature current of $L-M$ with analytic singularities.
With the elementary pointwise inequality $$\mathbf{1}_{X(\alpha-\beta, \leq 1)}(\alpha-\beta)^n \geq \alpha^n - n \alpha^{n-1}\wedge \beta$$ for positive $(1,1)$-forms, Theorem \ref{thm morse line bdl} then follows easily from \cite{Bon93}.
\end{remark}

\subsection{Theorem \ref{thm morse n-1}}
Finally, inspired by the method in our previous work \cite{xiao13weakmorse} (see also \cite{chiose2016kahler}), we show that Theorem \ref{thm mov-big} gives a Morse-type bigness criterion of the difference of two movable $(n-1, n-1)$-classes, thus giving the proof of Theorem \ref{thm morse n-1}.

\begin{proof}[Proof of Theorem \ref{thm morse n-1}]
Denote the K\"ahler cone of $X$ by $\mathcal{K}$, and denote the cone generated by cohomology classes represented by positive $(n-1, n-1)$-currents by $\mathcal{N}$. Then by the numerical characterization of K\"ahler cone of \cite{DP04} (see also \cite[Theorem 2.1]{BDPP13}) we have the cone duality relation
$$
\overline{\mathcal{K}}^\vee = \mathcal{N}.
$$

Without loss of generality, we can assume that $\alpha, \beta$ are big. Then the existence of a strictly positive
$(n-1, n-1)$-current in the class
$\langle \alpha^{n-1}\rangle-\langle \beta^{n-1}\rangle$ is equivalent to the existence of some positive constant $\delta>0$ such that
\begin{align*}
\langle\alpha^{n-1}\rangle-\langle \beta^{n-1}\rangle
\succeq \delta \langle \beta^{n-1}\rangle,
\end{align*}
or equivalently,
\begin{align*}
\langle\alpha^{n-1}\rangle
\succeq (1+\delta) \langle \beta^{n-1}\rangle.
\end{align*}
Here we denote $\gamma\succeq \eta$ if $\gamma-\eta$ contains a positive current.

In the following, we will argue by contradiction. By the cone duality relation $\overline{\mathcal{K}}^\vee = \mathcal{N}$, the class
$\langle \alpha^{n-1}\rangle-\langle \beta^{n-1}\rangle$ does not contain any strictly positive
$(n-1, n-1)$-current is then equivalent to the statement: for any $\epsilon>0$ there exists some non-zero class $N_\epsilon \in \overline{\mathcal{K}}$ such that
\begin{align*}
\langle \alpha^{n-1}\rangle\cdot N_\epsilon \leq
(1+\epsilon) \langle \beta^{n-1}\rangle\cdot N_\epsilon.
\end{align*}

On the other hand, we claim that Theorem \ref{thm mov-big} implies
$$
n (N\cdot \langle\alpha^{n-1}\rangle)(\alpha\cdot \langle \beta^{n-1}\rangle) \geq \langle\alpha^{n}\rangle (N\cdot \langle\beta^{n-1}\rangle)
$$
for any nef $(1,1)$-class $N$. First note that both sides of the above inequality are of the same degree of each cohomology class. After scaling, we can assume
$$
\alpha\cdot \langle \beta^{n-1}\rangle=N\cdot \langle\beta^{n-1}\rangle.
$$
Then we need to prove $n N\cdot \langle\alpha^{n-1}\rangle \geq \langle\alpha^{n}\rangle$. Otherwise, we have $n N\cdot \langle\alpha^{n-1}\rangle < \langle\alpha^{n}\rangle$. And Theorem \ref{thm mov-big} implies that there must exist a K\"ahler current in the class $\alpha-N$. Then we must have $$\langle \beta^{n-1}\rangle\cdot (\alpha-N)>0,$$
 which contradicts with our scaling equality
 $\langle \beta^{n-1}\rangle\cdot (\alpha-N)=0$.

Let $N=N_\epsilon$, we get
\begin{align*}
(1+\epsilon)n(N_\epsilon \cdot \langle \beta^{n-1}\rangle) (\alpha\cdot \langle \beta^{n-1}\rangle) &\geq n(N_\epsilon \cdot \langle \alpha^{n-1}\rangle)  (\alpha\cdot \langle \beta^{n-1}\rangle)\\
&\geq \langle\alpha^{n}\rangle (N_\epsilon \cdot \langle\beta^{n-1}\rangle).
\end{align*}
This implies
$$
(1+\epsilon)n \alpha\cdot \langle \beta^{n-1}\rangle \geq \langle\alpha^{n}\rangle.
$$
Since $\epsilon>0$ is arbitrary, this contradicts with our assumption $\langle\alpha^n\rangle-n \alpha\cdot \langle \beta^{n-1}\rangle>0$. Thus there must exist a strictly positive $(n-1,n-1)$-current in the class
$\langle \alpha^{n-1}\rangle-\langle \beta^{n-1}\rangle$.
\end{proof}

\begin{remark}
Let $X$ be a smooth projective variety of dimension $n$ and let $\Mov_1(X)$ be the closure of the cone generated by movable curve classes. In \cite{lehmann2015zariski}, we show that any interior point of $\Mov_1(X)$ is the form $\langle L^{n-1}\rangle$ for a unique big and movable divisor class. And under Demailly's conjecture on transcendental Morse inequality, this also extends to transcendental movable $(n-1, n-1)$-classes over compact K\"ahler manifold. In particular, this extends to compact hyperk\"ahler manifolds.
\end{remark}

\bibliography{reference}
\bibliographystyle{alpha}

\noindent\\

\noindent
\textsc{Fudan University $\&$ Universit\'{e} Grenoble Alpes}\\
\verb"Email: jian.xiao@ujf-grenoble.fr"

\end{document}